\newtheorem{theorem}{Theorem}[section]
\newtheorem{lemma}[theorem]{Lemma}
\theoremstyle{definition}
\newtheorem{remark}[theorem]{Remark}
\newcommand{\RR}{\mathbb{R}}
\title{Positive solutions to indefinite Neumann problems
\\when the weight
has positive average}
\author{Alberto Boscaggin and Maurizio Garrione}
\date{}
\begin{document}
\maketitle

\begin{abstract}
We deal with positive solutions for the Neumann boundary value problem associated with
the scalar second order ODE
$$
u'' + q(t)g(u) = 0, \quad t \in [0, T],
$$
where $g: [0, +\infty[\, \to \RR$ is positive on $\,]0, +\infty[\,$ and $q(t)$ is an indefinite weight. Complementary to
previous investigations in the case $\int_0^T q(t) < 0$, we provide existence results for a suitable class of weights having (small) positive mean, when $g'(x) < 0$ at infinity. Our proof relies on a shooting argument for a suitable equivalent planar system of the type
$$
x' = y, \qquad y' = h(x)y^2 + q(t),
$$
with $h(x)$ a continuous function defined on the whole real line.
\end{abstract}

\noindent
{\footnotesize \textbf{AMS-Subject Classification}}. {\footnotesize 34B15; 34B08.}\\
{\footnotesize \textbf{Keywords}}. {\footnotesize Indefinite weight;
Average condition; Neumann problem; Shooting method.}

\section{Introduction and statement of the main result}

In this paper, we deal with the existence of \emph{positive} solutions to the Neumann problem 
\begin{equation}\label{eqintro}
\left\{
\begin{array}{l}
u'' + q(t) g(u) = 0, \qquad t \in [0, T], \vspace{0.2 cm}\\
u'(0) = u'(T) = 0, 
\end{array}
\right.
\end{equation}
where $q \in L^1(0,T)$ and $g: [0,+\infty[\, \to \mathbb{R}$ is a $C^1$-function satisfying 
\begin{equation}\label{fondamentale}
g(0)=0 \quad \mbox{ and } \quad g(u) > 0, \quad \mbox{for every } u > 0.
\end{equation}
Integrating the equation on $[0, T]$, we immediately see that no positive solutions appear if the weight function $q(t)$ has constant sign; hence, we are led to take into account only \emph{indefinite} weights. This terminology, meaning that $q(t)$ takes both positive and negative values, is nowadays quite common
in Nonlinear Analysis, and it probably goes back to the papers \cite{AtkEveOng74,HesKat80}.
\smallbreak
A further condition on $q(t)$ naturally appears when dealing with problem \eqref{eqintro}, as already observed, for instance, in 
\cite{BanPozTes88,BosZan12,BosZan15}. Indeed, dividing the equation by $g(u(t))$ and integrating on $[0,T]$,
we find
\begin{equation}\label{med}
\int_0^T q(t)\,dt = -\int_0^T \left(\frac{u'(t)}{g(u(t))}\right)^2 g'(u(t))\,dt.
\end{equation}
Hence, the existence of positive solutions cannot be ensured if the weight function has nonnegative mean value whenever 
$g'(u) > 0$ for any $u > 0$ (this is the case, for instance, for the model nonlinearity $g(u) = u^p$ with $p > 0$).
Incidentally, we notice that the same considerations hold true for the $T$-periodic problem, as well as for the PDE counterpart of problem \eqref{eqintro}, as a consequence of the divergence theorem. 
\smallbreak
There is a rich bibliography showing that, under suitable supplementary assumptions on the nonlinear term
$g(u)$ and on the weight function $q(t)$, the average condition 
\begin{equation}\label{negativa}
\int_0^T q(t) \, dt < 0
\end{equation}
turns out to be sufficient to ensure the existence of positive solutions to \eqref{eqintro},
possibly also in the case when $g \notin C^1$ or $g'(u)$ changes its sign.
The first result in this direction was given by Bandle, Pozio and Tesei in \cite{BanPozTes88}, providing the solvability of 
the Neumann problem associated with the elliptic equation $\Delta u + q(x)g(u) = 0$
in the sublinear case, i.e., when $g(u)$ behaves like $u^p$ both at zero and at infinity, with $0 < p < 1$. Later on, several statements for the superlinear case (that is, $g(u)$ like
$u^p$ both at zero and at infinity with $p > 1$) followed, both for elliptic Neumann problems and Neumann or periodic ODE ones (see, among others, \cite{AlaTar93,AmaLop98,BerCapNir94,BerCapNir95,BosZan15,FelZan15} and the references therein). In all these contributions, the average condition \eqref{negativa} (or the corresponding one for the PDE case) plays a crucial role. 
\smallbreak
More recently, some attention has also been devoted to the so-called \emph{super-sublinear} case, meaning that
$g(u)$ behaves like $u^{\alpha}$, with $\alpha > 1$, at zero, and like $u^{\beta}$, with $\beta < 1$, at infinity; more in general, 
\begin{equation}\label{supersub}
\frac{g(u)}{u} \to 0 \quad \mbox{ both for } u \to 0 \;\mbox{ and } u \to +\infty.
\end{equation}
In this case, on the lines of classical works by Amann \cite{Ama72} and Rabinowitz \cite{Rab73},
it is convenient to write the weight function $q(t)$ in
\eqref{eqintro} as $q(t) = q_{\lambda}(t) = \lambda a(t)$, with $\lambda > 0$,
thus dealing with a parameter dependent equation like
\begin{equation}\label{supsub}
u'' + \lambda a(t) g(u) = 0.
\end{equation}
Up to some additional (mild) technical assumptions, a pair of positive Neumann solutions is then obtained when $\int_0^T q_{\lambda}(t)\,dt = \lambda \int_0^T a(t)\,dt < 0$ and $\lambda$ is big enough \cite{BosFelZan16,BosZan12}. The largeness of the parameter is indeed essential, since nonexistence results can often be provided for $\lambda$ small (see again the previously quoted papers and Remark \ref{parametri}). 
\smallbreak
It is a natural question, on the contrary, whether there are examples of Neumann problems of the class
\eqref{eqintro} admitting a positive solution when
$$
\int_0^T q(t) \, dt \geq 0;
$$
of course, in view of the above discussion,
this is possible only if $g'(u)$ changes its sign on $]0,+\infty[\,$.
To the best of our knowledge, this issue is far less investigated and it is the aim of the present paper to give a possible contribution in this direction.  
Precisely, we will consider the case when there exists $R > 0$ such that 
\begin{equation}\label{ipoinf}
g'(u) < 0 \; \textnormal{ for every } u > R \quad \textnormal { and } \quad \lim_{u \to +\infty} g'(u)=0. 
\end{equation}
This of course implies that $g(u)$ is bounded (decreasingly converging to a horizontal asymptote), thus entering the setting of problems with sublinear growth at infinity. As for the conditions at zero, motivated by the existing literature, we choose to assume that there exists $r > 0$ such that 
\begin{equation}\label{ipozero}
g'(u) > 0 \;\textnormal{ for every } 0 <u < r \quad \textnormal { and } \quad \lim_{u \to 0^+} g'(u)=0. 
\end{equation}
Summing up, we thus deal with a particular type of super-sublinear problem (indeed, condition \eqref{supersub} is of course satisfied); an explicit example of nonlinearity satisfying 
\eqref{fondamentale}, \eqref{ipoinf} and \eqref{ipozero} and thus fitting in our framework is given by
$$
g(u) = \frac{c_1 u^{\alpha}}{1 + c_2 u^{\gamma}}
$$
for $c_1, c_2 > 0$ and $1 < \alpha < \gamma$. 
\smallbreak
Roughly speaking, we are going to show that, whenever \eqref{ipoinf} and \eqref{ipozero} hold, the existence of positive solutions
to \eqref{eqintro} is still ensured if the average of the weight function is ``positive but small enough''. To express this in a formal way, we 
need to introduce - with respect to \eqref{supsub} - a further parameter $\mu > 0$, thus writing the weight function in the form
$$
q(t) = q_{\lambda,\mu}(t) = \lambda a^+(t) - \mu a^-(t),
$$ 
with $a^+(t), a^-(t)$ the positive and the negative part of a sign-changing 
function $a \in L^1(0, T)$ (accordingly, $a(t)$ is sign-changing up to zero measure sets, i.e., $\int_0^T a^+(t) \, dt
, \int_0^T a^-(t) \, dt > 0$). We also make the further assumption that
$a(t)$ changes sign just once on $[0,T]$, namely (up to substituting $T-t$ for $t$) there exists $\tau \in [0, T]$ such that  
\begin{equation}\label{ipotesia}
a(t) \geq 0 \; \mbox{ for a.e. } t \in [0, \tau], \qquad a(t) \leq 0 \; \mbox{ for a.e. } t \in [\tau, T].
\end{equation}
Moreover, for any $\lambda > 0$ we define 
\begin{equation}\label{mucritico}
\mu_0(\lambda) = \lambda\;\frac{\int_0^T a^+(t) \, dt}{\int_0^T a^-(t) \, dt}\,,
\end{equation}
in such a way that
$$
\int_0^T q_{\lambda,\mu}(t)\,dt \geq 0 \quad \Longleftrightarrow \quad \mu \leq \mu_0(\lambda).
$$
We are now in a position to illustrate the complete picture concerning positive solutions to the Neumann problem
\begin{equation}\label{eqintro2}
\left\{
\begin{array}{l}
u'' + \big( \lambda a^+(t)- \mu a^-(t)\big) g(u) = 0, \qquad t \in [0, T], \vspace{0.2 cm}\\
u'(0) = u'(T) = 0, 
\end{array}
\right.
\end{equation}
giving the following statement.

\begin{theorem}\label{thmain}
Assume that \eqref{fondamentale}, \eqref{ipoinf}, \eqref{ipozero} and \eqref{ipotesia} hold. Then, for any $\lambda > 0$ there exist $\mu_-(\lambda), \mu_+(\lambda) > 0$, with $\mu_-(\lambda) < \mu_0(\lambda) < \mu_+(\lambda)$, such that problem \eqref{eqintro2} has at least 
one positive solution for $\mu=\mu_0(\lambda)$ and at least two positive solutions for any $\mu \in \,]\mu_-(\lambda), \mu_+(\lambda)[\,$
different from $\mu_0(\lambda)$. Moreover, $\mu_+(\lambda) = +\infty$
for any $\lambda$ sufficiently large, that is, for $\lambda > \lambda^*$, with $\lambda^* > 0$ depending only on $g(u)$ and
$a^+(t)$.
\end{theorem}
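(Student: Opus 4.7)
The plan is to implement the shooting argument announced in the abstract. Since $g>0$ on $\,]0,+\infty[\,$ and $g'$ vanishes at both endpoints of this interval by \eqref{ipoinf} and \eqref{ipozero}, the integral $G(u):=\int_{1}^{u}ds/g(s)$ defines a $C^{1}$ diffeomorphism $\,]0,+\infty[\,\to\RR$. Setting $x=-G(u)$ and $y=x'$, the ODE in \eqref{eqintro2} is equivalent to the planar system
\begin{equation*}
x'=y,\qquad y'=h(x)\,y^{2}+q_{\lambda,\mu}(t),
\end{equation*}
with $h(x):=g'(G^{-1}(-x))$ continuous and bounded on $\RR$ and satisfying $h(x)\to 0$ as $|x|\to+\infty$. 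Since $g(u)>0$, the Neumann conditions $u'(0)=u'(T)=0$ translate to $y(0)=y(T)=0$, so that positive solutions of \eqref{eqintro2} are in bijection with the zeros of the shooting map $\Phi_{\lambda,\mu}(x_{0}):=y(T;\,x_{0})$ associated with the Cauchy problem $x(0)=x_{0}$, $y(0)=0$.

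I would then work out two preliminary ingredients. First, a Riccati-type estimate of the form $|y'|\leq \|h\|_{\infty}y^{2}+|q_{\lambda,\mu}|$, combined with the decay of $h$ at $\pm\infty$, yields global existence of solutions on $[0,T]$ at least for $|x_{0}|$ large, and standard continuous dependence makes $\Phi_{\lambda,\mu}$ continuous on its natural domain. Second, writing $u_{0}=G^{-1}(-x_{0})$, I would analyse the behavior of $\Phi_{\lambda,\mu}$ at infinity: when $x_{0}\to+\infty$ one has $u_{0}\to 0^{+}$ and $g(u_{0})\to 0$, whereas when $x_{0}\to-\infty$ one has $u_{0}\to+\infty$ and $g(u_{0})\to L:=\lim_{u\to+\infty}g(u)$. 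In both regimes a first-order perturbative expansion gives $u(t)-u_{0}=O\bigl(g(u_{0})\bigr)$ uniformly in $t\in[0,T]$, hence
\begin{equation*}
u'(T)=-g(u_{0})\int_{0}^{T}q_{\lambda,\mu}(t)\,dt+o\bigl(g(u_{0})\bigr),\qquad \Phi_{\lambda,\mu}(x_{0})\longrightarrow \int_{0}^{T}q_{\lambda,\mu}(t)\,dt.
\end{equation*}

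The heart of the proof is the construction of an intermediate shooting level $x_{0}^{*}=x_{0}^{*}(\lambda)$ at which $\Phi_{\lambda,\mu}(x_{0}^{*})$ has sign opposite to that of $\int_{0}^{T}q_{\lambda,\mu}(t)\,dt$ for every $\mu$ in a neighbourhood of $\mu_{0}(\lambda)$. Here the one-sign-change assumption \eqref{ipotesia} is crucial: a solution departing from $(u_{0}^{*},0)$ with $u_{0}^{*}$ in the "strong" region of $g$ (say near the maximizer of $g$, which lies between $r$ and $R$ by \eqref{ipoinf}--\eqref{ipozero}) has $u''\leq 0$ on $[0,\tau]$ and $u''\geq 0$ on $[\tau,T]$, so that $u$ descends first and is then pushed back. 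A quantitative tracking of this evolution, exploiting the super-sublinear behavior of $g$, allows one to calibrate $u_{0}^{*}$ so that $u'(T)<0$ -- equivalently $\Phi_{\lambda,\mu}(x_{0}^{*})>0$ -- uniformly for $\mu$ in an interval $\,]\mu_{-}(\lambda),\mu_{+}(\lambda)[\,$ centered at $\mu_{0}(\lambda)$. Combining this with the asymptotic formula, the intermediate value theorem produces two zeros of $\Phi_{\lambda,\mu}$ whenever $\int_{0}^{T}q_{\lambda,\mu}<0$; a symmetric choice of $x_{0}^{*}$ handles the opposite sign of the mean; and existence at $\mu=\mu_{0}(\lambda)$ follows by passing to the limit along either branch. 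Finally, to obtain $\mu_{+}(\lambda)=+\infty$ when $\lambda$ is sufficiently large, I would strengthen the previous step to a \emph{uniform} sign statement: for $\lambda>\lambda^{*}$ (with $\lambda^{*}$ depending only on $g$ and $a^{+}$) the action of $\lambda a^{+}(t)$ on $[0,\tau]$ can be made so dominant that no amount of $\mu a^{-}$ on $[\tau,T]$ can restore $u'(T)\geq 0$ at $x_{0}=x_{0}^{*}$.

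The main obstacle will be the quantitative construction of $x_{0}^{*}$ and the attached sign analysis: one must select an initial datum that engages $g$ in a specific nonlinear regime so that the integrated effect of $q_{\lambda,\mu}$ along the trajectory overrides the asymptotic value $\int_{0}^{T}q_{\lambda,\mu}$, with estimates that remain robust both as $\mu$ varies near $\mu_{0}(\lambda)$ and, for the final large-$\lambda$ statement, for all $\mu\geq 0$.
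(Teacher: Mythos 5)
Your setup (the change of variables $x=W(u)$, the planar Riccati-type system, the identification of Neumann solutions with zeros of a shooting map) matches the paper's, but from there you take a one-sided shooting route --- a scalar map $\Phi_{\lambda,\mu}(x_0)=y(T;x_0)$ analysed by the intermediate value theorem --- whereas the paper shoots from \emph{both} endpoints to the sign-change time $\tau$ of \eqref{ipotesia} and looks for intersections of two planar curves $\Gamma^+_\lambda$ (forward from $0$ to $\tau$) and $\Gamma^-_\mu$ (backward from $T$ to $\tau$). This difference is not cosmetic, and it is where your argument has genuine gaps. First, $\Phi_{\lambda,\mu}$ need not be defined except for $|x_0|$ large: solutions can blow up (i.e.\ $u$ can reach $0$) both on $[0,\tau]$ for intermediate data --- the paper proves this \emph{always} happens on an interval $[x_*,x^*]$ once $\lambda>\lambda^*$ --- and on $[\tau,T]$, where the convexity of $u$ can drive it to zero in finite time when $u'(\tau)$ is very negative. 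Your IVT argument is run on a domain with holes whose structure you do not control; the paper's backward shooting on $[\tau,T]$ is designed precisely so that those solutions are \emph{always} global (convex and decreasing backward in time), which is what makes the curve $\Gamma^-_\mu$ well defined.

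Second, the ``heart'' of your proof --- an intermediate datum $x_0^*$ near the maximizer of $g$ with $u'(T)<0$ uniformly for $\mu$ near $\mu_0(\lambda)$ --- is asserted, not proved, and nothing in the stated considerations determines the sign of $u'(T)$ there: after the concave descent on $[0,\tau]$ the solution is convex on $[\tau,T]$, and whether $u'$ recovers to a nonnegative value at $T$ is exactly the balance you would need to quantify. The paper's replacement for this step is qualitatively different: it shows that the right tail of $\Gamma^-_{\mu_0(\lambda)}$ must lie \emph{below} $\Gamma^+_\lambda$ and the left tail \emph{above}, because the opposite configuration would produce, after a small perturbation of $\mu$, a Neumann solution with $|x|$ large, which is excluded by integrating the equation and using the sign of $h$ in \eqref{ipotesih} (i.e.\ the identity \eqref{med}). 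Note also that at $\mu=\mu_0(\lambda)$ your tail limits are both $0$, so even granting $\Phi(x_0^*)>0$ the IVT yields nothing without this finer sign information near the tails; your proposed ``limit along either branch'' needs an a priori bound keeping one family of zeros from escaping to infinity, which again is the tail-position lemma in disguise. Finally, for $\mu_+(\lambda)=+\infty$ your plan of making $\lambda a^+$ dominant at the fixed datum $x_0^*$ cannot work as stated: for $\lambda>\lambda^*$ the forward solution from such intermediate data does not even reach $t=\tau$ (this is precisely how the paper defines $\lambda^*$). The paper instead exploits the blow-up: $\Gamma^+_\lambda$ splits into two branches, each with $y\to+\infty$, and each must cross the $y$-bounded curve $\Gamma^-_\mu$ for every $\mu>\mu_0(\lambda)$ by a connectedness argument. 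As it stands, your proposal identifies the right framework but leaves unproved (and in one place unworkable) the three steps on which the theorem actually rests.
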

Some part of this statement is not new. Precisely, the existence of two positive solutions for $\lambda$ large enough and any $\mu > \mu_0(\lambda)$
(recall that in this case $\mu_+(\lambda)=+\infty$) is a corollary\footnote{Indeed, in \cite[Theorem 1.2]{BosFelZan16} the existence of two positive $T$-periodic solutions is proved, for $\hat{\lambda}$ larger than a suitable constant $\hat{\lambda}^* > 0$,
for the equation
$u'' + \hat{\lambda} \hat{a}(t)g(u) = 0$, whenever $g(u)$ satisfies \eqref{fondamentale} as well as $g'(0) = g'(\infty) = 0$
and $\hat{a}(t)$ is positive (and not identically zero) on some subinterval $I \subset [0,T]$ with $\int_0^T \hat{a}(t)\,dt < 0$.
In the discussion leading to \cite[Corollary 1.1]{BosFelZan16}, moreover, it was shown that $\hat{\lambda}^*$ can be chosen depending only on the behaviour of $\hat{a}(t)$ on $I$; this yields the existence of two positive $T$-periodic solutions for the equation $u'' + \big( \lambda a^+(t)- \mu a^-(t)\big) g(u) = 0$
when $\lambda$ is large enough and $\mu > \mu_0(\lambda)$. The possibility of taking into account also Neumann boundary conditions
is discussed in \cite[Section 5]{BosFelZan16}.} of the results in \cite{BosFelZan16}. However, Theorem \ref{thmain} refines this piece of information, since a pair of positive solutions appears \emph{for every} $\lambda > 0$ (maybe unexpectedly; we will briefly comment on this in Remark \ref{parametri}), provided that the average of the weight function is ``negative but small enough'', that is, $\mu \in \,]\mu_0(\lambda), \mu_+(\lambda)[\,$. On the other hand, as desired, Theorem \ref{thmain} gives the existence of positive solutions 
to \eqref{eqintro2} when the average of $q_{\lambda,\mu}(t)$ is ``positive but small enough'', namely for 
$\mu \in \,]\mu_-(\lambda),\mu_0(\lambda)]$; again, this happens for any $\lambda > 0$.
Of course, we notice that, given an indefinite weight $q(t)$ in \eqref{eqintro} with positive mean value, by no means we can establish, via Theorem \ref{thmain}, if its average is so small that the existence is ensured; however, our result certainly allows to find many examples of weights with (small) positive mean for which this happens and, even more, this can be done simply by suitably contracting the negative part of any sign-changing weight $a \in L^1(0,T)$ (satisfying \eqref{ipotesia}).  

We refer to Figure \ref{pianolambdamu} at the end of the paper for a visual representation of the statement, while
its optimality (referring to the precise meaning
of the expressions ``negative/positive but small enough'') will be discussed in Remark \ref{Nonesiste}.
\smallbreak
To prove Theorem \ref{thmain}, we will use a dynamical argument relying on an elementary shooting method. More precisely, through the change of variables introduced in \cite{BosZan15}, we transform the equation in \eqref{eqintro2} into a first order planar system of the type
\begin{equation}\label{sistema}
\left\{
\begin{array}{l}
x' = y  \vspace{0.2 cm} \\
y' = h(x) y^2 + q_{\lambda, \mu}(t),
\end{array}
\right.
\end{equation}
noticing that the Neumann boundary conditions translate here into $y(0) = y(T) = 0$. Then, on varying of the parameters $\lambda, \mu$, we look for intersections between the two planar curves obtained by shooting the $x$-axis forward (from $0$ to $\tau$) and backward (from $T$ to $\tau$) in time.
A very similar argument was recently exploited in \cite{BosGar16}, dealing with nonlinearities on the whole real line, 
the main difference being here the possible appearance of a blow-up phenomenon for the solutions to \eqref{sistema}. 
The drawback of our approach is that it may require involved computations when the weight $a(t)$ is not a two-step function (this being the reason why we limit ourselves to the configuration \eqref{ipotesia}); on the other hand, we stress that the case $\mu < \mu_0(\lambda)$, which actually motivates our investigation, 
seems quite delicate to be tackled using functional analytic techniques (see Remark \ref{variazione}). 
\smallbreak
The detailed proof of Theorem \ref{thmain} will be given in Section \ref{sezdue}. The final Section \ref{seztre}
is instead devoted to some further comments, variants and corollaries of our result.

\section{Proof of Theorem \ref{thmain}}\label{sezdue}

\subsection{A topological lemma}

We first state a topological lemma concerning planar curves disconnecting $\mathbb{R}^2$,
which can be seen as a variant of the Jordan Curve Theorem. We believe that such a result is well known; however,
since it seems not easy to find an appropriate reference, we provide an explicit proof.

\begin{lemma}\label{topologico}
Let $\gamma: \mathbb{R} \to \mathbb{R}^2$ be a continuous and injective function
satisfying
\begin{equation}\label{propria}
\lim_{\vert t \vert \to +\infty} \vert \gamma(t) \vert = +\infty.
\end{equation}
Then
$\mathbb{R}^2 \setminus \gamma(\mathbb{R})$ consists exactly of two connected components, both unbounded.
\end{lemma}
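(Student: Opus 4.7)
The plan is to reduce the statement to the classical Jordan curve theorem on the sphere $S^{2}$, viewed as the one-point compactification $\mathbb{R}^{2}\cup\{\infty\}$ of the plane; correspondingly, $S^{1}=\mathbb{R}\cup\{\infty\}$. The crucial observation is that assumption \eqref{propria} is exactly what is needed to extend $\gamma$ continuously across the point at infinity.

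Concretely, I would define $\tilde{\gamma}:S^{1}\to S^{2}$ by $\tilde{\gamma}(t)=\gamma(t)$ for $t\in\mathbb{R}$ and $\tilde{\gamma}(\infty)=\infty$. Continuity at finite points is inherited from $\gamma$; continuity at $\infty$ follows from \eqref{propria}, since the basic neighborhoods of $\infty$ in $S^{2}$ are the complements of large closed balls of $\mathbb{R}^{2}$, whose $\tilde{\gamma}$-preimages are, by \eqref{propria}, complements of large intervals in $\mathbb{R}$, i.e.\ neighborhoods of $\infty$ in $S^{1}$. Injectivity of $\tilde{\gamma}$ is immediate from injectivity of $\gamma$ together with the fact that no finite $t$ is sent to $\infty$. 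Therefore $\tilde{\gamma}$ is a Jordan curve in $S^{2}$.

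Applying the Jordan curve theorem on $S^{2}$, I obtain that $S^{2}\setminus\tilde{\gamma}(S^{1})$ has exactly two connected components $U_{1},U_{2}$, open in $S^{2}$ and having $\tilde{\gamma}(S^{1})$ as common boundary. Since $\tilde{\gamma}(S^{1})=\gamma(\mathbb{R})\cup\{\infty\}$, the identity
$$
\mathbb{R}^{2}\setminus\gamma(\mathbb{R})\;=\;S^{2}\setminus\tilde{\gamma}(S^{1})\;=\;U_{1}\sqcup U_{2}
$$
gives the desired decomposition into two connected components.

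It remains to show that both components are unbounded in $\mathbb{R}^{2}$, i.e.\ that $\infty$ is an accumulation point, in $S^{2}$, of each $U_{i}$. This is where I anticipate the only subtle step: a bare ``two components'' statement would still allow one of them to be a bounded region in $\mathbb{R}^{2}$. To rule this out I would use the sharper part of the Jordan curve theorem stating that $\partial U_{i}=\tilde{\gamma}(S^{1})$ for $i=1,2$ (the Jordan separation property); since $\infty\in\tilde{\gamma}(S^{1})=\partial U_{i}$, every neighborhood of $\infty$ in $S^{2}$ — equivalently, every set $\{|z|>R\}\subset\mathbb{R}^{2}$ — meets $U_{i}$, so $U_{i}$ is unbounded in $\mathbb{R}^{2}$, concluding the proof.
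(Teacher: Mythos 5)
Your proposal is correct and follows essentially the same route as the paper: extend $\gamma$ to a Jordan curve on the one-point compactification $\mathbb{S}^2$ via condition \eqref{propria}, apply the Jordan Curve Theorem on the sphere, and deduce unboundedness of both components from the fact that $\infty$ lies on the curve and hence in the boundary of each component. No gaps.
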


\begin{proof}
Condition \eqref{propria} implies that $\gamma$ can be extended in a continuous way to a map $\tilde{\gamma}: \mathbb{S}^1 \to \mathbb{S}^2$, where $\mathbb{S}^d$, $d \geq 1$, is the one-point compactification of $\mathbb{R}^d$, i.e., $\mathbb{S}^d = \mathbb{R}^d \cup \{\infty\}$, with canonical injection $j_d: \mathbb{R}^d \to \mathbb{S}^d$. Then, $\tilde{\gamma}$ is a Jordan curve on the sphere $\mathbb{S}^2$, so that, by the Jordan Curve Theorem on spheres, $\mathbb{S}^2 \setminus \tilde\gamma(\mathbb{S}^1)=\mathcal{C}_1 \sqcup \mathcal{C}_2$, where $\mathcal{C}_i$, $i=1, 2$ are connected clopen sets in $\mathbb{S}^2 \setminus \tilde\gamma(\mathbb{S}^1)$. Since $\{\infty\} \notin \mathcal{C}_i$, $i=1, 2$, we have
$$
\mathbb{R}^2 \setminus \gamma(\mathbb{R}) = j_2^{-1} (\mathbb{S}^2 \setminus \tilde\gamma(\mathbb{S}^1)) = j_2^{-1} (\mathcal{C}_1 \sqcup \mathcal{C}_2) =j_2^{-1} (\mathcal{C}_1) \sqcup j_2^{-1}(\mathcal{C}_2),
$$  
where both the sets $j_2^{-1} (\mathcal{C}_1)$ and $j_2^{-1}(\mathcal{C}_2)$ are connected and clopen in $\mathbb{R}^2 \setminus \gamma(\mathbb{R})$. The unboundedness of such connected components follows from the fact that $\{\infty\}$ is a boundary point both for $\mathcal{C}_1$ and $\mathcal{C}_2$. 
\end{proof}

Incidentally, we notice that condition \eqref{propria} holds true if and only if $\gamma$ is a proper map, i.e,
preimages of compact sets are compact.

\subsection{The shooting argument}

We first adapt to our setting the change of variables introduced in \cite{BosZan15}.
Set
$$
W(u)=-\int_1^u \frac{dx}{g(x)}, \quad \mbox{ for every } u > 0;
$$
in view of \eqref{fondamentale}, \eqref{ipoinf} and \eqref{ipozero}, $W(u)$ is a strictly decreasing $C^2$-diffeomorphism of 
$\,]0,+\infty[\,$ onto $\mathbb{R}$ (actually, $g'(x) \to 0$ both for $x \to 0^+$ and for
$x \to +\infty$ implies that the two integrals
$\int_0^1 1/g$ and $\int_1^{+\infty}1/g$ are divergent).
Then, $u(t)$ is a \emph{positive} solution to the differential equation in \eqref{eqintro2}
if and only if
\begin{equation}\label{ilcambio}
x(t):=W(u(t))
\end{equation}
solves  
\begin{equation}\label{scalare}
x'' = h(x) (x')^2 + q_{\lambda,\mu}(t),
\end{equation}
where $h: \mathbb{R} \to \mathbb{R}$ is the continuous function given by 
$$
h(x)=g'(W^{-1}(x)).
$$
As a consequence of \eqref{ipoinf} and \eqref{ipozero}, we have
\begin{equation}\label{ipotesih}
h(x)x > 0, \quad \mbox{ for every } \vert x \vert > d := \max\{\vert W(r) \vert, \vert W(R) \vert\},
\end{equation}
and
\begin{equation}\label{ipotesih2}
\lim_{\vert x \vert \to +\infty} h(x) = 0.
\end{equation}
Moreover, since $x'(t) = -u'(t)/g(u(t))$, we have that $u(t)$ satisfies the Neumann boundary conditions in \eqref{eqintro2} if and only if
$x(t)$ does. 
\smallbreak
Now, the situation may look like the one considered in \cite[Theorem 2.2]{BosGar16}. However, here we have to take into account the possible failure
of the global continuability of the solutions to \eqref{scalare}; notice, indeed, that the equivalence between 
$u'' + q_{\lambda,\mu}(t)g(u) = 0$ and \eqref{scalare} is guaranteed only as long as $u(t) > 0$. While, on one hand, this requires a refinement of the techniques therein, on the other hand it gives rise to a richer picture of solvability (that is, we will prove that $\mu^+(\lambda) = +\infty$ for $\lambda$ large).
\smallbreak
We then proceed with the proof.
Let us start by writing \eqref{scalare} as the equivalent planar system
\begin{equation}\label{sistema3}
\left\{
\begin{array}{ll}
x' = y \vspace{0.2 cm} \\
y' = h(x) y^2 + q_{\lambda,\mu}(t) \\
\end{array} \right.
\end{equation}
and, for every $z_0 = (x_0,y_0) \in \mathbb{R}^2$ and $t_0\in [0,T]$,
denote by 
$$
\zeta_{\lambda,\mu}(\cdot;t_0,z_0) = (x_{\lambda,\mu}(\cdot;t_0,z_0),y_{\lambda,\mu}(\cdot;t_0,z_0))
$$
the solution to \eqref{sistema3} with $\zeta_{\lambda,\mu}(t_0;t_0,z_0) = z_0$, whenever this is defined.
The standard theory of ODEs guarantees that such a map is continuous (in all its variables, including the parameters $\lambda,\mu$) on its domain.
\smallbreak
We first focus on the behaviour of the solutions in the interval $[\tau,T]$, thus considering the map
$\zeta_\mu(t;T;z_0) = \zeta_{\lambda,\mu}(t;T,z_0)$ (indeed, in this time interval the parameter $\lambda$ does not appear). 

\begin{lemma}\label{ramo-}
For every $\mu > 0$ and $x_0 \in \mathbb{R}$, the backward solution $\zeta_\mu(\cdot;T,(x_0,0))$ is defined on the whole interval
$[\tau,T]$. Moreover, for any $\epsilon \in \,]0,1]$, and $\mu_1, \mu_2$ with  $0 < \mu_1 < \mu_2$, there exists $N_\epsilon > 0$ such that, for every $\vert x_0 \vert \geq N_\epsilon$ and $\mu \in [\mu_1, \mu_2]$,
it holds
\begin{equation}\label{ramo-f}
\left \vert y_\mu(\tau;T,(x_0,0)) - \mu \int_\tau^T a^-(t)\,dt \right \vert < \epsilon, \qquad
\vert x_\mu(\tau;T,(x_0,0))  - x_0 \vert < N,
\end{equation}
where $N$ is a suitable constant depending on $a^-(t), \mu_1$ and $\mu_2$. 
\end{lemma}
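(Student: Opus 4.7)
The plan is to establish the two conclusions of the lemma together via a standard bootstrap/continuation argument, comparing the true backward trajectory with the ``frozen'' one obtained by dropping the $h(x)y^2$ term. On $[\tau,T]$ we have $a^+\equiv 0$, so $q_{\lambda,\mu}(t)=-\mu a^-(t)$ and the auxiliary system $\bar x'=\bar y$, $\bar y'=-\mu a^-(t)$ with terminal data $(x_0,0)$ admits the explicit solution $\bar y(t)=\mu\int_t^T a^-(s)\,ds$ and $\bar x(t)=x_0-\int_t^T\bar y(s)\,ds$. In particular $|\bar y(t)|\leq K:=\mu_2\|a^-\|_{L^1}$ uniformly in $x_0$, and this quantity $K$ serves as the target size for the true $y$-component.

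First I would deal with backward global existence. Fix $\epsilon\in(0,1]$, let $J\subseteq[\tau,T]$ be the maximal subinterval on which $\zeta_\mu(\cdot;T,(x_0,0))$ is defined and satisfies $|y(t)|\le 2K$, and note $J$ is non-empty (it contains a left neighbourhood of $T$ since $y(T)=0$). On $J$ one has $|x(t)-x_0|\leq\int_t^T|y(s)|\,ds\leq 2K(T-\tau)$, and by \eqref{ipotesih2} one may choose $N_\epsilon>0$ so large that
\[
\sup\bigl\{|h(x)|\,:\,|x|\geq N_\epsilon-2K(T-\tau)\bigr\}\leq\delta,
\]
for a $\delta>0$ (depending on $\epsilon$, $K$, $T-\tau$) to be fixed below. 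Assuming $|x_0|\ge N_\epsilon$, it follows that $|h(x(t))|\le\delta$ throughout $J$. Then integrating $y' = h(x)y^2+q_{\lambda,\mu}$ backward from $T$ gives, for every $t\in J$,
\[
|y(t)|\;\leq\;\int_t^T|h(x(s))|\,y(s)^2\,ds\,+\,\mu\int_t^T a^-(s)\,ds\;\leq\;4\delta K^{2}(T-\tau)+K.
\]
Choosing $\delta<1/(4K(T-\tau))$ forces $|y(t)|<2K$ strictly on $J$, which rules out the maximality of $J$ being caused either by $|y|$ touching $2K$ or by blow-up, so that $J=[\tau,T]$.

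Once global existence and the bound $|y|\leq 2K$ on $[\tau,T]$ are in hand, the two estimates in \eqref{ramo-f} follow at once. For the second, the bound on $y$ gives $|x_\mu(\tau;T,(x_0,0))-x_0|\leq 2K(T-\tau)=:N$, depending only on $\mu_2$ and $a^-$. For the first, integrating the $y$-equation backward from $T$ yields
\[
y_\mu(\tau;T,(x_0,0))\;=\;\mu\int_\tau^T a^-(s)\,ds\;-\;\int_\tau^T h(x(s))\,y(s)^2\,ds,
\]
so that the left-hand side of the first inequality in \eqref{ramo-f} is bounded by $4\delta K^{2}(T-\tau)$, which is $<\epsilon$ for $\delta$ small enough; this fixes the choice of $N_\epsilon$ through the threshold prescribed by \eqref{ipotesih2}.

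The main technical point is the potential blow-up of solutions to \eqref{sistema3} mentioned in the text (the change of variables is only equivalent as long as $u>0$), and the apparent circularity in the bootstrap: the smallness of $h(x(t))$ rests on $x(t)$ staying close to $x_0$, which in turn needs $|y(t)|$ bounded, which again rests on $h(x(t))$ being small. The continuation argument above dissolves this circularity by working on the maximal interval where all desired bounds hold and proving that the bounds are in fact strict there, hence preserved by a small extension; this is the step deserving the most care.
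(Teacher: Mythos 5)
Your bootstrap argument for the regime $\vert x_0\vert \ge N_\epsilon$ is correct and complete: the frozen comparison $\bar y(t)=\mu\int_t^T a^-(s)\,ds$, the choice of $N_\epsilon$ via \eqref{ipotesih2}, and the continuation argument on the maximal interval where $\vert y\vert\le 2K$ together give both inequalities in \eqref{ramo-f}. This part is in fact more self-contained than the paper, which simply invokes \cite[Lemma 5]{BosZan15} for these estimates.

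There is, however, a genuine gap in the first assertion. The lemma claims that the backward solution is defined on all of $[\tau,T]$ for \emph{every} $x_0\in\mathbb{R}$, and this is exactly what is needed later for $\Gamma^-_\mu$ to be a curve parametrized over the whole real line. Your continuation argument only covers $\vert x_0\vert\ge N_\epsilon$: for moderate $\vert x_0\vert$ you have no smallness of $h(x(t))$, only boundedness, and the resulting Riccati inequality $\vert y(t)\vert\le H\int_t^T y(s)^2\,ds + K$ with $H=\sup\vert h\vert$ does not preclude finite-time blow-up (compare $y'=y^2$). The missing ingredient is the sign structure of the weight on $[\tau,T]$, which is invisible in your estimate because you take absolute values. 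The paper's argument returns to the original variable $u=W^{-1}(x)$: on $[\tau,T]$ the equation reads $u''=\mu a^-(t)g(u)\ge 0$, so $u$ is convex with $u'(T)=0$, hence $u'\le 0$ and $u(t)\ge u(T)>0$ on any subinterval of $[\tau,T]$ where the solution exists; since moreover $g$ is bounded, $u$ can neither reach the singular boundary $\{u=0\}$ (which corresponds to $x\to+\infty$) nor escape to $+\infty$ in finite backward time, and global existence for every $x_0$ follows. You should supply this (or an equivalent) argument for $\vert x_0\vert<N_\epsilon$; with that addition your proof is complete.
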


\begin{proof}
To prove that the solution $\zeta_\mu(\cdot;T,(x_0,0))$ is defined on the whole interval
$[\tau,T]$, it is enough to observe that any (backward) solution $u(t)$ to $u'' - \mu a^-(t)g(u) = 0$ satisfying 
$u(T) > 0$ and $u'(T) = 0$ is convex and decreasing on any subinterval $[t^*,T] \subset [\tau,T]$ where it exists.
Hence, $u(t) \geq u(T) > 0$ avoids blow-up phenomena on the boundary $\{ u = 0 \}$;
on the other hand, $u(t)$ cannot blow-up at infinity since $g(u)$ is bounded. 

The second part of the statement follows from \cite[Lemma 5]{BosZan15}. 
\end{proof}

Next, we consider the solutions in the interval $[0,\tau]$, dealing with the map
$\zeta_\lambda(t;0;z_0) = \zeta_{\lambda,\mu}(t;0,z_0)$ (now, symmetrically, the parameter $\mu$ does not appear). 
Here the situation changes a bit.

\begin{lemma}\label{ramo+}
Let us fix $\lambda > 0$. Then, two (mutually excluding) alternatives can occur:
\begin{itemize}
\item[(A)] for any $x_0 \in \mathbb{R}$, the solution $\zeta_\lambda(\cdot;0,(x_0,0))$ is defined on the whole interval
$[0,\tau]$;
\item[(B)] there exist $x_*,x^* \in \mathbb{R}$, with $x_* \leq x^*$, such that the solution $\zeta_\lambda(\cdot;0,(x_0,0))$ is defined on the whole interval
$[0,\tau]$ for $x \notin [x_*,x^*]$ and
\begin{equation}\label{divergenza}
\lim_{x_0 \to (x_*)^-} y_\lambda(\tau;0,(x_0,0)) = \lim_{x_0 \to (x^*)^+} y_\lambda(\tau;0,(x_0,0)) = +\infty.
\end{equation}
\end{itemize}
In both cases, for any $\epsilon \in \,]0,1]$, there exists $M_\epsilon > \max\{\vert x_* \vert, \vert x^* \vert\}$ such that, for every $\vert x_0 \vert \geq M_\epsilon$ it holds
\begin{equation}\label{ramo+f}
\left \vert y_\lambda(\tau;0,(x_0,0)) - \lambda \int_0^\tau a^+(t)\,dt \right \vert < \epsilon, \qquad
\vert x_\lambda(\tau;0,(x_0,0))  - x_0 \vert < M,
\end{equation}
where $M$ is a suitable constant depending on $a^+(t)$ and $\lambda$. 
\end{lemma}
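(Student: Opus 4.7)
I would translate the problem back to the original variable $u$ via \eqref{ilcambio}. Setting $u_0 := W^{-1}(x_0)$, the planar Cauchy problem for \eqref{sistema3} with $(x,y)(0)=(x_0,0)$ is equivalent to the scalar problem
\[
u'' + \lambda a^+(t)\, g(u) = 0, \qquad u(0) = u_0,\quad u'(0) = 0,
\]
and the $(x,y)$-solution is defined on $[0,\tau]$ if and only if $u(\cdot;u_0)$ remains strictly positive on $[0,\tau]$. Since $a^+\geq 0$ and $g\geq 0$, any such $u$ is concave and non-increasing; since $g$ is bounded by \eqref{ipoinf}, $u$ itself cannot blow up, so the only obstruction is the vanishing of $u$ at some first time $\bar t(u_0)\in (0,\tau]$. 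Let $F := \{u_0>0 : \bar t(u_0)\leq \tau\}$, which is closed in $(0,+\infty)$ by continuous dependence. From $|u'(t)| \leq \lambda \bigl(\int_0^\tau a^+\bigr)\sup_{[0,\tau]} g(u)$, one checks that $u(t)>0$ on $[0,\tau]$ both for $u_0$ sufficiently large (using $\sup g(u)\leq \|g\|_\infty$) and for $u_0<r$ sufficiently small (using the monotonicity of $g$ on $[0,r]$ to get $\sup g(u)=g(u_0)$, together with $g(u_0)/u_0\to 0$ which follows from \eqref{ipozero} and $g(0)=0$). Hence $F$ is bounded and bounded away from $0$: if $F=\emptyset$ we are in case~(A); otherwise set $u^* := \inf F$, $u_* := \sup F$, and $x_* := W(u_*)\leq x^* := W(u^*)$, giving case~(B) since $W$ is decreasing.

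The key step is the divergence \eqref{divergenza}. I will first show that $\bar t(u_*) = \tau$ with $u'(\tau;u_*) < 0$. Uniqueness for the Cauchy problem at $(u,u')=(0,0)$ (valid since $g\in C^1$) rules out $u'(\bar t(u_*);u_*)=0$, since the only solution would then be $u\equiv 0$, contradicting $u_*>0$. If $\bar t(u_*)<\tau$, continuous dependence on $[0,\bar t(u_*)]$ combined with the monotonicity of $u'$ past $\bar t(u_*)$ (since $u''\leq 0$) would force $u(\cdot;u_0)$, for $u_0>u_*$ sufficiently close, to vanish at time $\bar t(u_*)+O(u(\bar t(u_*);u_0))<\tau$, contradicting $u_*=\sup F$. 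Thus $\bar t(u_*)=\tau$. Now continuous dependence on the whole of $[0,\tau]$ gives, as $u_0\to u_*^+$, $u(\tau;u_0)\to 0^+$ and $u'(\tau;u_0)\to u'(\tau;u_*) < 0$, so
\[
y_\lambda(\tau;0,(x_0,0)) \;=\; -\frac{u'(\tau;u_0)}{g(u(\tau;u_0))}\;\to\; +\infty.
\]
The limit $x_0\to(x^*)^+$, which corresponds to $u_0\to (u^*)^-$, is handled identically.

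Finally, for \eqref{ramo+f}, a Mean Value Theorem argument—exploiting the vanishing of $g'(\xi)$ as $\xi\to 0^+$ or $\xi\to+\infty$, together with the bound on $|u(s)-u_0|$ already obtained—yields $g(u(s))/g(u(\tau))\to 1$ uniformly in $s\in[0,\tau]$ as $|x_0|\to+\infty$. Consequently
\[
y_\lambda(\tau;0,(x_0,0)) \;=\; \lambda\int_0^\tau a^+(s)\,\frac{g(u(s))}{g(u(\tau))}\,ds \;\to\; \lambda\int_0^\tau a^+(t)\,dt,
\]
while $|x_\lambda(\tau;0,(x_0,0))-x_0| = \bigl|\int_{u(\tau)}^{u_0} \frac{ds}{g(s)}\bigr|$ is uniformly bounded by a constant $M$ depending only on $a^+$ and $\lambda$. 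The main obstacle is the identification $\bar t(u_*)=\tau$: on the one hand $u_*\in F$ forces the critical solution to hit zero by $\tau$, while on the other $u_*=\sup F$ forces all slightly larger $u_0$ to stay positive on $[0,\tau]$. Squeezing these two together, via uniqueness at the degenerate Cauchy data $(0,0)$ combined with concavity of $u$ past $\bar t(u_*)$, is what pins the critical vanishing time exactly to the endpoint.
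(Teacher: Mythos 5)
Your proposal is correct, and its overall architecture coincides with the paper's: both pass back to the original variable $u$ via \eqref{ilcambio}, use concavity and the boundedness of $g$ to show that the only obstruction to continuability is the vanishing of $u$, obtain existence for $|x_0|$ large from an a priori bound (the paper simply cites \cite[Lemma 5]{BosZan15} for this and for \eqref{ramo+f}, which you instead reprove; your Mean Value Theorem sketch for \eqref{ramo+f} is sound provided you use, as you do, the refined estimate $|u(s)-u_0|\le C\,g(u_0)$ in the regime $u_0\to 0^+$), and define the critical data as the supremum/infimum of the set of ``bad'' initial values. The one genuinely different ingredient is the proof of \eqref{divergenza}: the paper argues by contradiction, taking a sequence $x_0^n\nearrow x_*$ with $y_n(\tau)$ bounded and using a compactness argument to force $u_n\to 0$ uniformly, against $u_0^n\to W^{-1}(x_*)\neq 0$; you instead argue directly, using uniqueness at the degenerate datum $(0,0)$ and concavity to pin the vanishing time of the critical solution $u(\cdot;u_*)$ exactly at $\tau$ with $u'(\tau;u_*)<0$, and then let continuous dependence give $y(\tau)=-u'(\tau)/g(u(\tau))\to+\infty$. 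Your route yields slightly more information (the transversal vanishing of the critical solution at $t=\tau$) at the cost of having to justify continuous dependence up to the boundary $\{u=0\}$ of the phase space — which is harmless here, e.g.\ by extending $g$ to $\mathbb{R}$ as a $C^1$ function vanishing on $]-\infty,0]$ (possible since $g(0)=0$ and $g'(0^+)=0$) — whereas the paper's contradiction argument stays entirely inside the open region $\{u>0\}$.
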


\begin{proof}
First, again from \cite[Lemma 5]{BosZan15} we have that there exists $M > 0$ such that 
the solution $\zeta_\lambda(\cdot;0,(x_0,0))$ exists on the whole $[0,\tau]$ when $\vert x_0 \vert \geq M$; moreover, \eqref{ramo+f} holds true.
We now define
$$
x_* = \sup\{x \in \mathbb{R}  : \,\zeta_\lambda(\cdot;0,(x_0,0)) \, \mbox{ is defined on } [0,\tau] \mbox{ for every } x_0 < x\}
$$
and
$$
x^* = \inf\{x \in \mathbb{R}  : \,\zeta_\lambda(\cdot;0,(x_0,0)) \, \mbox{ is defined on } [0,\tau] \mbox{ for every } x_0 > x\};
$$
notice that both the sets are non-empty in view of the previous discussion, so that
$-\infty < x_*$ and $x^* < +\infty$. It is clear that 
$$
x_* > x^* \; \Longleftrightarrow \; x_* = +\infty \; \Longleftrightarrow \; x^* = -\infty
$$
and this holds if and only if case (A) of the statement occurs. Thus, we assume that this does not happen
and we prove \eqref{divergenza}.

Assume by contradiction that there exists a sequence $x_0^n \nearrow x_*$ such that
$y_n(t):= y_\lambda(t;0,(x_0^n,0))$ satisfies $y_n(\tau) \leq \Theta$ for any $n$ and a suitable $\Theta > 0$. 
Going back to the original variables, this means that the solution $u_n(t)$ to the Cauchy problem 
$$
\left\{
\begin{array}{l}
u_n'' + \lambda a^+(t) g(u_n) = 0  \vspace{0.2 cm}\\
u_n(0) = u_0^n := W^{-1}(x_0^n)  \vspace{0.2 cm}\\ u'_n(0) = 0 
\end{array}
\right.
$$
fulfills
\begin{equation}\label{assurdo}
-\frac{u_n'(\tau)}{g(u_n(\tau))} \leq \Theta.
\end{equation}
We claim that $u_n(\tau) \to 0$. Indeed, if this is not the case, since $u_n(t)$
is concave and decreasing there exists $\delta > 0$ such that $\delta \leq u_n(\tau) \leq u_n(t) \leq u_0^n$
for every $t \in [0,\tau]$. Then, a standard compactness argument yields a positive solution
to $u'' + \lambda a^+(t)g(u) = 0$, defined on the whole $[0,\tau]$ and satisfying $u(0) = W^{-1}(x_*)$, $u'(0) = 0$.
This contradicts the definition
of $x_*$ and the claim is proved. In view of \eqref{assurdo}, we infer that $u'_n(\tau) \to 0$. Again in view of the concavity of $u_n(t)$,
$0 \geq u_n'(t) \geq u_n'(\tau) \to 0$ for every $t \in [0,\tau]$, whence $u_n' \to 0$ uniformly on $[0,\tau]$.
Since $u_n(\tau) \to 0$, it follows that $u_n \to 0$ uniformly on $[0,\tau]$, against the fact that $u_0^n \to W^{-1}(x_*) \neq 0$. 
The case $x_0^n \searrow x^*$ is completely analogous.
\end{proof}

We now continue the proof taking first into account case (A) of Lemma \ref{ramo+}. In this setting, the argument is really the same as the one in \cite[Theorem 2.2]{BosGar16}, after having observed that the set
$$
\Gamma^+_\lambda = \{\zeta_\lambda(\tau;0, (x_0, 0))\, : \, x_0 \in \mathbb{R}\} \subset \mathbb{R}^2
$$
disconnects the plane into two connected components. In the framework therein, this was a consequence of the global continuability of the solutions
(implying that the Poincar\'e map is a global homeomorphism of the plane onto itself), while now this comes from Lemmas \ref{ramo+} and \ref{topologico}.
Indeed, from the second inequality in \eqref{ramo+f} we have that the parametrized curve $\mathbb{R} \ni x_0 \mapsto \zeta_\lambda(\tau;0,(x_0,0))$ satisfies
condition \eqref{propria}.
\smallbreak
However, for future convenience, we give an informal flavour of the above argument, always referring to \cite[Section 3]{BosGar16} for the technical details.
We have
$$
\mathbb{R}^2 \setminus \Gamma^+_\lambda = \mathcal{O}_d \cup \mathcal{O}_u,
$$
where $\mathcal{O}_d,\mathcal{O}_u$ are connected clopen sets in $\mathbb{R}^2 \setminus \Gamma^+_\lambda$; precisely, we choose $\mathcal{O}_d$
(resp., $\mathcal{O}_u$) as the component ``below'' (resp., ``above'') $\Gamma^+_\lambda$. We now set
$$
\Gamma^-_{\mu} = \{\zeta_{\mu}(T;\tau, (x_0, 0))\, : \, x_0 \in \mathbb{R}\} \subset \mathbb{R}^2
$$
(which is well-defined in view of Lemma \ref{ramo-}) and we observe that points in the intersection
$
\Gamma^+_\lambda \cap \Gamma^-_\mu
$
are in a one-to-one correspondence with Neumann solutions to \eqref{scalare}.
Thus we search for such intersections, focusing at first on the mutual position between $\Gamma^+_\lambda$ and $\Gamma^-_{\mu_0(\lambda)}$.
The first inequalities in \eqref{ramo-f} and \eqref{ramo+f} ensure that the $y$-components of $\Gamma^+_\lambda$ and
$\Gamma^-_{\mu_0(\lambda)}$ approach the same value
$$
\lambda \int_0^\tau a^+(t)\,dt = \mu_0(\lambda) \int_\tau^T a^-(t)\,dt.
$$
However, more can be said (cf. \cite[Lemma 3.5]{BosGar16}): precisely, the right tail of $\Gamma^-_{\mu_0(\lambda)}$ (that is, the set
$\{\zeta_{\mu_0(\lambda)}(T;\tau, (x_0, 0)) \, : x_0 \gg 0\}$) lies in $\mathcal{O}_d$, while the left tail
(defined analogously for $x_0 \ll 0$) lies in $\mathcal{O}_u.\,$\footnote{This is the key point of the argument and it is a consequence of the sign condition assumed on $g'(x)$. Indeed, were the right tail in $\mathcal{O}_u$, by slightly decreasing $\mu$ it would intersect $\Gamma^+_{\lambda}$ 
(use again the first inequalities in \eqref{ramo-f}, \eqref{ramo+f}); this would be a large Neumann solution
to \eqref{scalare}, which is prohibited in view of \eqref{ipotesih} (cf. \cite[Proposition 3.1]{BosGar16}, that is, just integrate the equation).}
This ensures the existence of at least one intersection between $\Gamma^+_{\lambda}$ and 
$\Gamma^-_{\mu_0(\lambda)}$; clearly enough, such an intersection persists when slightly varying $\mu$ in a (two-sided) neighborhood of
$\mu_0(\lambda)$. On the other hand, for $\mu$ in a sufficiently small neighborhood of $\mu_0(\lambda)$ a second intersection 
between $\Gamma^+_\lambda$ and $\Gamma^-_\mu$ comes from the tails: precisely, for $\mu < \mu_0(\lambda)$ it appears   
on the left-tail, while for $\mu > \mu_0(\lambda)$ it appears on the right one (of course, we are using once more the inequalities in \eqref{ramo-f}, \eqref{ramo+f}).
\smallbreak
We now deal with case (B) in Lemma \ref{ramo+}. In this situation, we can find $\delta > 0$ so small that
$$
y_\lambda(\tau; 0, (x_0, 0)) > 2\, \max_{x_0 \in \mathbb{R}} \,  y_{\mu_0(\lambda)}(T; \tau, (x_0, 0)), \quad \textrm{ for  } x_0 \in \, ]x_*-\delta, x_*[\, \cup \,]x^*, x^*+\delta[
$$
and we define  
$$
\Gamma^+_\lambda = \{\zeta_\lambda(\tau;0, (x_0, 0))\, : \, x_0 \in \mathbb{R} \setminus [x_*-\delta, x^*+\delta]\} \cup \mathcal{S}_\lambda,
$$
where $\mathcal{S}_\lambda$ is the segment joining the points $\zeta_\lambda(\tau; 0, (x_*-\delta, 0))$ and $\zeta_\lambda(\tau; 0, (x^*+\delta, 0))$.
Clearly, this set still disconnects $\mathbb{R}^2$, in view of Lemmas \ref{ramo+} and \ref{topologico}; moreover, its
tails have the same properties as before. We can thus check that the very same arguments apply, yielding one intersection
between $\Gamma^+_\lambda$ and $\Gamma_{\mu_0(\lambda)}$ and two intersections
between $\Gamma^+_\lambda$ and $\Gamma^-_\mu$ for $\mu$ in a small deleted neighborhood of $\mu_0(\lambda)$.
Up to shrinking such a neighborhood of $\mu_0(\lambda)$ we can assume that,
for any $\mu$ therein,
$$
\max_{x_0 \in \mathbb{R}} \,  y_{\mu}(T; \tau, (x_0, 0)) < 2 \max_{x_0 \in \mathbb{R}} \,  y_{\mu_0(\lambda)}(T; \tau, (x_0, 0)),
$$
so that $\mathcal{S}_\lambda \cap \Gamma^-_\mu = \emptyset$ and the produced intersections still give rise to Neumann solutions
to \eqref{scalare}. 
\smallbreak
We have thus proved the first part of Theorem \ref{thmain}; to conclude the proof, we still have to show that $\mu^+(\lambda) = +\infty$ if $\lambda \gg 0$, that is, for $\lambda$ large there are always two solutions for any $\mu > \mu_0(\lambda)$. 

At first, we claim that there exists $\lambda^* > 0$, depending only on $g(u)$ and $a^+(t)$, such that for $\lambda > \lambda^*$ we are surely in case (B)
of Lemma \ref{ramo+}. Actually, we are going to show that if $u(t)$ is a positive solution of $u'' + \lambda a^+(t)g(u) = 0$
satisfying $u(0) = 1$, $u'(0) = 0$ and defined on the whole $[0,\tau]$, then $\lambda$ must be smaller than a constant (not depending on $u(t)$).
As a first step, we fix
$\epsilon > 0$ so small that $a_\epsilon:=\int_0^{\tau-\epsilon}a^+(t)\,dt > 0$; by integrating the equation on $[0,\tau-\epsilon]$, we find
\begin{equation}\label{stl}
\lambda =  -\frac{u'(\tau-\epsilon)}{\int_0^{\tau-\epsilon}a^+(t)g(u(t))\,dt}.
\end{equation}
Now, noticing that $u(t)$ must be concave on $[0,\tau]$, we have on one hand
$$
1 \geq u(\tau)-u(\tau-\epsilon) = -\int_{\tau}^{\tau-\epsilon}u'(s)\,dt \geq -u'(\tau-\epsilon)\epsilon.
$$
On the other hand, again by a concavity argument, $u(t) \geq \tfrac{1}{\tau}(\tau-t)$ for any $t \in [0,\tau]$ so that
$$
u(t) \geq \frac{\epsilon}{\tau}, \quad \mbox{ for any } t \in [0,\tau-\epsilon];
$$
hence
$$
g(u(t)) \geq m_\epsilon:= \min_{u \geq \epsilon/\tau}g(u), \quad \mbox{ for any } t \in [0,\tau-\epsilon].
$$
Going back to \eqref{stl}, it follows that
$$
\lambda \leq \frac{1}{\epsilon m_\epsilon a_\epsilon} =: \lambda^*,
$$
as desired.

We now fix $\lambda > \lambda^*$ and, using the notation in case (B) of Lemma \ref{ramo+}, we set  
$$
\Gamma^{+, l}_\lambda= \{\zeta_\lambda(\tau; 0, (x_0, 0)) \, : \, x_0 < x_*\}
$$
and 
$$
\Gamma^{+, r}_\lambda= \{\zeta_\lambda(\tau; 0, (x_0, 0)) \, : \, x_0 > x^*\},
$$ 
observing that $\Gamma^{+, l}_\lambda \cap \Gamma^{+, r}_\lambda = \emptyset$. 
Fixed $\mu > \mu_0(\lambda)$,
this time we see that the set $\Gamma^-_\mu$ disconnects the plane
(in view of Lemmas \ref{ramo-} and \ref{topologico}). Similarly as before, we can then write
$$
\mathbb{R}^2 \setminus \Gamma^-_\mu = \mathcal{A}_d \cup \mathcal{A}_u,
$$
where $\mathcal{A}_d,\mathcal{A}_u$ are connected clopen sets in $\mathbb{R}^2 \setminus \Gamma^-_\mu$. Arguing as in \cite[Lemma 3.4]{BosGar16}, we can prove that such sets have the following property: 
\smallbreak
\noindent
$\bullet$ \emph{for every $\bar{y} \neq \mu\int_\tau^T a^-(t)\,dt$, there exists
$\eta(\bar{y}) > 0$ such that}
$$
\big(\,]-\infty,-\eta(\bar{y})] \times [\bar{y},+\infty[\,\big) \cup
\big([\eta(\bar{y}),+\infty[\, \times [\bar{y},+\infty[\,\big)
 \subset \mathcal{A}_u \quad \mbox{ if } \; \bar{y} > \mu\int_\tau^T a^-(t)\,dt
$$
\emph{and}
$$
\big(\,]-\infty,-\eta(\bar{y})] \times \,]-\infty,\bar{y}]\big) \cup
\big([\eta(\bar{y}),+\infty[\, \times \,]-\infty,\bar{y}]\big)
\subset \mathcal{A}_d \quad \mbox{ if } \; \bar{y} < \mu\int_\tau^T a^-(t)\,dt.
$$
In particular, since the $y$-coordinate on the set $\Gamma^-_\mu$ is bounded, this implies that the connected component $\mathcal{A}_d$ (resp. $\mathcal{A}_u$) contains all the horizontal lines $y=\bar{y}$ with $\bar{y} \ll 0$ (resp. $\bar{y} \gg 0$). 
Hence, Lemma \ref{ramo-} implies that both $\Gamma^{+, l}_\lambda$ and $\Gamma^{+, r}_\lambda$ own points in both the connected components $\mathcal{A}_d, \mathcal{A}_u$, so that, by an elementary connectedness argument, we deduce 
$$
\Gamma^{+, l}_\lambda \cap \Gamma^-_\mu \neq \emptyset \neq \Gamma^{+, r}_\lambda \cap \Gamma^-_\mu,
$$
thus finding the desired two solutions.

\begin{figure}[!h]
\centering
\includegraphics[scale=0.8]{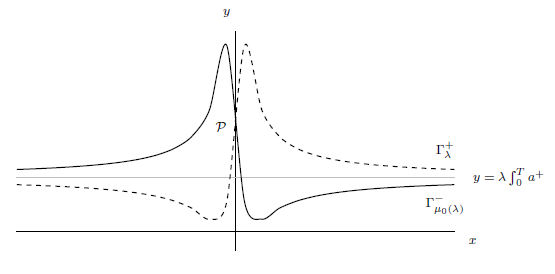}
\label{globale}
\begin{flushleft}
\caption{\footnotesize We depict the mutual position of the curves $\Gamma_\lambda^+$ (dashed) and $\Gamma_{\mu_0(\lambda)}^-$ in
the
setting of alternative (A) in Lemma \ref{ramo+}; namely, the curve $\Gamma_\lambda^+$ is
global (as observed in Remark \ref{Nonesiste}, this is always the case when $\lambda>0$ is small enough).
The $y$-components of the two curves approach the same value $\lambda \int_0^T a^+ = \mu_0(\lambda) \int_0^T a^-$ in view of Lemmas
\ref{ramo-} and \ref{ramo+}, with mutual position as described throughout the proof. We notice the presence of an intersection
$\mathcal{P}$ between the two curves; clearly enough, when moving $\mu$ in a sufficiently small neighborhood of
$\mu_0(\lambda)$, such an intersection persists and a new one comes from the tails (on the left for $\mu \in \,]\mu_-(\lambda),
\mu_0(\lambda)[\,$, on the right for $\mu \in \,]\mu_0(\lambda), \mu_+(\lambda)[\,$). It is also possible to see that, taking $\mu$ too large
or too small, the two curves become disjoint (agreeing with Remark \ref{Nonesiste}).
}
\end{flushleft}
\end{figure}

\begin{figure}[!h]
\centering
\includegraphics[scale=0.8]{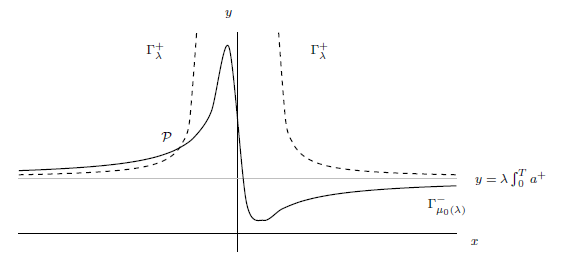}
\label{esplode}
\begin{flushleft}
\caption{\footnotesize We depict the mutual position of the curves $\Gamma_\lambda^+$ (dashed) and $\Gamma_{\mu_0(\lambda)}^-$ in
the
setting of alternative (B) in Lemma \ref{ramo+}; here, the curve $\Gamma_\lambda^+$ ``breaks'' into two
branches (this being always the case for $\lambda > \lambda^*$).
Again, the $y$-components of the two curves approach the same value and a first intersection $\mathcal{P}$ between
$\Gamma_\lambda^+$ and $\Gamma_{\mu_0(\lambda)}^-$ comes between the mutual position of the tails. When moving
$\mu$ in a \emph{left} neighborhood of $\mu_0(\lambda)$, the behavior is the same as in the previous case (with
a second intersection appearing on the left for $\mu$ not too small); here the novelty appears for $\mu > \mu_0(\lambda)$,
since both $\mathcal{P}$ and the intersection coming from the right tail persist for \emph{every}
$\mu > \mu_0(\lambda)$.}
\end{flushleft}
\end{figure}

\section{Some miscellaneous remarks}\label{seztre}

\begin{remark}\label{Nonesiste}
\textbf{Nonexistence issues.} We remark that, arguing as in \cite[Theorem 4.1]{BosGar16}, it is possible to show that, for any fixed $\lambda > 0$, \eqref{eqintro2} has no positive solutions for $\mu > 0$ small enough. This implies that the condition $\mu_-(\lambda) > 0$ in Theorem \ref{thmain} cannot be improved into $\mu_-(\lambda) = 0$, explaining the expression ``positive but small enough'', used throughout the paper referring to the average of the weight function $q_{\lambda,\mu}(t) = \lambda a^+(t) - \mu a^-(t)$. 

On the other hand, wishing to briefly comment on the finiteness of $\mu^+(\lambda)$ (when $\lambda > 0$ is small),
we first notice that, by using Sturm comparison techniques (at zero and at infinity) together with continuous dependence arguments, it is not difficult to see
that, when $\lambda > 0$ is small enough, we are in case (A) of Lemma \ref{ramo+}. Once this is established, one can argue again as in \cite[Theorem 4.1]{BosGar16} to ensure that \eqref{eqintro2} has no positive solutions for $\mu$ large enough (up to a slightly stronger condition on 
$a(t)$). This means that $\mu_+(\lambda) < +\infty$ for $\lambda$ small, clarifying the meaning of the expression
``negative but small enough'' used in the Introduction.

See also Figure \ref{globale} and its caption for a graphical explanation.  
\end{remark}

\begin{remark}\label{variazione}
\textbf{A functional-analytic viewpoint.}
In order to highlight the peculiarity of the case $\mu \leq \mu_0(\lambda)$ treated in our paper, 
we think that a comparison with the functional analytic approach to problem \eqref{eqintro2}
can be helpful. In particular, we first focus on the variational approach, following \cite{BosZan12}.
After having extended $g(u)$ to the whole real line line as an odd function and setting
$G(u) = \int_0^u g(x)\,dx$, positive solutions to \eqref{eqintro2} can be found as critical points of the action functional
$$
J_{\lambda,\mu}(u) = \frac{1}{2}\int_0^T u'(t)^2 \,dt - \int_0^T F_{\lambda,\mu}(t,u(t))\,dt, \qquad u \in H^1(0,T), 
$$
satisfying $u(t) > 0$ for every $t \in [0,T]$, where $F_{\lambda,\mu}(t,u) = q_{\lambda,\mu}(t)G(u)$.
In this setting, the condition $\mu > \mu_0(\lambda)$ plays a crucial role in two different directions: 
on one hand (together with a mild technical superlinearity assumption for $g(u)$ at zero) it ensures that the origin 
$u \equiv 0$ is a strict local minimum for $J_{\lambda,\mu}$; on the other hand 
(again together with a suitable sublinearity assumption at infinity), it ensures the coercivity of $J_{\lambda,\mu}$
(compare with Ahmad-Lazer-Paul type conditions in \cite[Theorem 4.8]{MawWil89}). Then, two critical points can be provided, via global minimization and the Mountain
Pass lemma, whenever there exists $\bar{u} \in H^1(0,T)$ such that $J_{\lambda,\mu}(\bar{u}) < 0$ (see Remark \ref{parametri} below for more comments
on this point); it is standard to see that both these critical points correspond to \emph{positive} solutions to \eqref{eqintro2}. For $\mu \leq \mu_0(\lambda)$, it is clear that all this discussion fails, being the geometry of the functional of completely different and not easily detectable nature. 

Similar difficulties would arise wishing to use a topological approach as in \cite{BosFelZan16}. Without going into the details, 
the assumption $\mu > \mu_0(\lambda)$ is crucial in order to show that the coincidence degree of a suitable operator associated with \eqref{eqintro2} equals $1$ on small and large balls (being on the other hand equal to zero on intermediate ones and thus providing the desired pair of solutions),
while an analogous argument seems to be difficult in the case $\mu \leq \mu_0(\lambda)$. 
\end{remark}

\begin{remark}\label{parametri}
\textbf{The role of $\lambda$.}
The role of the parameter $\lambda$ deserves some comments, as well. Indeed, when $\lambda$ is large enough one can easily find $\bar{u} \in H^1(0, T)$ such that $J_{\lambda, \mu}(\bar{u}) < 0$ independently of $\mu$ (just  by taking $\textnormal{supp}(\bar{u})$ contained in an interval where $a(t) > 0$), thus providing a pair of positive solutions to \eqref{eqintro2} for any $\mu > \mu_0(\lambda)$, according to the discussion in Remark \ref{variazione} above.
This agrees with the recent contributions \cite{BosFelZan16,BosZan12}.
From this point of view, the possibility of finding positive solutions for \emph{any} positive $\lambda$
(but $\mu$ in a bounded neighborhood of $\mu_0(\lambda)$ when $\lambda$ is small) could then 
seem a quite relevant novelty of Theorem \ref{thmain}. However,
on one hand this is likely to be provable also with variational techniques, for $\mu > \mu_0(\lambda)$
(carefully evaluating the functional $J_{\lambda,\mu_0(\lambda)}$ on ``small, almost constant'' functions, so as to prove $\inf J_{\lambda,\mu} < 0$ for $\mu$ near $\mu_0(\lambda)$).
On the other hand, we stress that the solvability picture given by Theorem \ref{thmain} has not to be misunderstood with the one for 
the single parameter equation 
\begin{equation}\label{naturale}
u''+\lambda a(t) g(u)=0
\end{equation}
(that is, the differential equation in \eqref{eqintro2} for $\mu=\lambda$), which is indeed the natural one when studying super-sublinear problems. Actually, 
positive Neumann solutions to \eqref{naturale} do not exist for $\int_0^T a(t)\,dt < 0$ and $\lambda$ small \cite[Theorem 1.2]{BosFelZan16}. This, however,
is not a contradiction. Indeed, the first part of Theorem \ref{thmain} is of local nature with respect to $\mu$, providing solutions only in a neighborhood of
$\mu_0(\lambda)$; from this point of view, we could equivalently have set $\lambda=1$ and dealt with the equation
$u'' + (a^+(t) - \hat{\mu} a^-(t)) g(u) = 0$. Anyway, it is meaningful to keep the parameter $\lambda$ as well, since this leads to the second part of our statement (namely, $\mu_+(\lambda) = +\infty$ for $\lambda$ large) which recovers part of the previously mentioned results. See Figure \ref{pianolambdamu} below.
\end{remark}

\begin{remark}
\textbf{Increasing nonlinearities.}
By combining the arguments in this paper with the ones in \cite{BosGar16}, it is possible to prove the following:
\smallbreak
\noindent
\textit{Assume that \eqref{fondamentale} and \eqref{ipozero} hold true and that
$$
g'(u) > 0 \; \textnormal{ for every } u > R \quad \textnormal { and } \quad \lim_{u \to +\infty} g'u)=0; 
$$
let \eqref{ipotesia} hold, as well. Then, for any $\lambda > 0$ there exists $\mu_+(\lambda) > \mu_0(\lambda)$
such that problem \eqref{eqintro2} has at least two positive solutions for any $\mu \in \,]\mu_0(\lambda), \mu_+(\lambda)[\,$. 
Moreover, $\mu_+(\lambda) = +\infty$
for any $\lambda$ sufficiently large, that is, for $\lambda > \lambda^*$, with $\lambda^* > 0$ depending only on $g(u)$ and
$a^+(t)$.}
\smallbreak
\noindent
This result complements previously known statements by taking into account the case $\lambda > 0$ small, according to the discussion
in Remark \ref{parametri}. Of course, here we cannot in general expect existence for $\mu \leq \mu_0(\lambda)$, since
nonlinearities with $g'(u) > 0$ for any $u > 0$ are allowed (recall \eqref{med}).
\end{remark}

\begin{remark}
\textbf{Periodic and radially symmetric solutions.}
We finally recall that Theorem \ref{thmain} can be used to
construct positive $T$-periodic solutions to the differential equation
in \eqref{eqintro2} when $a(t)$ is $T$-periodic, satisfies $a(\sigma + t)=a(\sigma - t)$ for some $\sigma \in [0, T[\,$ and almost every $t \in \mathbb{R}$ and, for a suitable $\tau' \in \,]0, T/2[\,$,
$$
a(t) \geq 0 \; \mbox{ for a.e. } t \in [\sigma, \sigma + \tau'], \qquad 
a(t) \leq 0 \; \mbox{ for a.e. } t \in [\sigma+\tau', \sigma + T/2].
$$
Indeed, after solving the Neumann BVP on $[\sigma, \sigma+T/2]$ 
a positive $T$-periodic solution can be obtained by a symmetry extension with respect to $\sigma$ (cf. \cite[Corollary 4]{BosZan15} for further details). 

On the other hand, Theorem \ref{thmain} also provides positive radial solutions to the Neumann BVP associated with an elliptic equation like
\begin{equation}\label{anello}
\Delta u + \big( \lambda a^+(x) - \mu a^-(x)\big) g(u) = 0, \qquad x \in \mathcal{O},
\end{equation}
where $\mathcal{O} = \{ x \in \mathbb{R}^d : 0 < r_1 < \vert x \vert < r_2 \}$ is an open annulus around the origin and the weight
$a(x) = a^+(x)-a^-(x)$ is radially symmetric, that is, $a(x) = \mathcal{A}(\vert x \vert)$.
Indeed, looking for a positive radial solution $\mathcal{U}(\vert x \vert)$ to \eqref{anello} yields the ODE 
\begin{equation}\label{radiale}
\mathcal{U}'' + \frac{d-1}{r}\;\mathcal{U}' + \big( \lambda \mathcal{A}^+(r) - \mu \mathcal{A}^-(r)\big)g(\mathcal{U}) = 0, \qquad r = \vert x \vert \in [r_1,r_2],
\end{equation}
and the original Neumann boundary conditions read as $\mathcal{U}'(r_1) = \mathcal{U}'(r_2) = 0$. Then, a standard change of variables transforms
\eqref{radiale} into an equation of the type $v'' + (\lambda b^+(t) - \mu b^-(t))g(v) = 0$,
where $b(t)$ is an indefinite weight with the same shape of $a(\vert x \vert)$. For more details, see
\cite[Section 5.1]{BosFelZan16}.
\end{remark}

\begin{figure}[!h]
\centering
\includegraphics[scale=0.8]{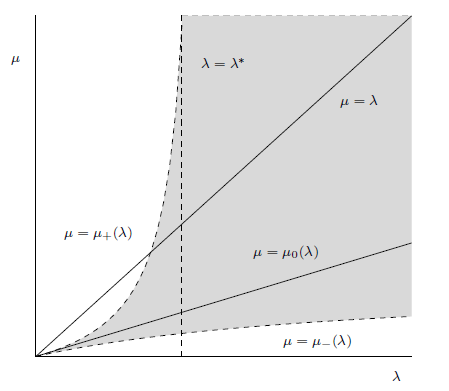}
\begin{flushleft}
\caption{\footnotesize We give a pictorial description of Theorem \ref{thmain}, showing the solvability regions for
positive solutions to \eqref{eqintro2} in the $(\lambda,\mu)$-plane. For a better comparison with the 
existing literature, we choose a weight function $a(t)$ with negative mean value, that is, $\int_0^T a(t)\,dt < 0$,
implying that the critical line $\mu = \mu_0(\lambda)$ lies below the bisecting line $\mu = \lambda$.
We see that solvability is ensured in the shaded region bounded by the curves $\mu = \mu_-(\lambda)$ and
$\mu = \mu_+(\lambda)$, lying respectively below and above the line $\mu = \mu_0(\lambda)$; in particular, 
$\mu_+(\lambda) = +\infty$ for $\lambda > \lambda^*$, implying that the whole epigraph of $\mu_-(\lambda)$ is 
contained in the solvability region. We observe that this picture is compatible with the known non-existence results
for the equation $u'' + \lambda a(t)g(u) = 0$ when $\lambda$ is small; indeed, the curve $\mu = \mu_+(\lambda)$ may well
be below the bisecting line $\mu = \lambda$ for $\lambda$ in a right neighborhood of the origin.}
\end{flushleft}
\label{pianolambdamu}
\end{figure}

\vspace{1 cm}
\normalsize

Authors' addresses:
\bigbreak
\medbreak
\indent Alberto Boscaggin \\
\indent Dipartimento di Matematica, Universit\`a di Torino, \\
\indent Via Carlo Alberto 10, I-10123 Torino, Italy \\
\indent e-mail: alberto.boscaggin@unito.it \\

\medbreak
\indent Maurizio Garrione \\
\indent Dipartimento di Matematica e Applicazioni, Universit\`a di Milano-Bicocca, \\
\indent Via Cozzi 53, I-20125 Milano, Italy \\
\indent e-mail: maurizio.garrione@unimib.it \\

\end{document}